\newtheorem{theorem}{Theorem}
\newtheorem{proposition}{Proposition}
\newtheorem{corollary}{Corollary}
\newtheorem{definition}{Definition}
\newtheorem{remark}{Remark}
\newcommand{\speci}{$k$-special} 
\newcommand{\mod}{ {~\rm mod~}}
\author{Andr\'e K\"undgen\affiliationmark{1}\thanks{Supported by ERC Advanced Grant GRACOL, project no. 320812.}
  \and Tonya Talbot\affiliationmark{1} %\thanks{I am fully supported.}
  }
\title{Nonrepetitive edge-colorings of trees} 
\affiliation{
  % one line per affiliation, no postal codes, grant numbers or similar
  California State University San Marcos, San Marcos, CA, USA}
\keywords{Thue coloring, Repetition-free coloring, Square-free coloring}
\begin{document}
\bibliographystyle{siam}
\publicationdetails{19}{2017}{1}{18}{2651}
\maketitle

\begin{abstract}
 A repetition is a sequence of symbols in which the first half is the
same as the second half. An edge-coloring of a graph is repetition-free or nonrepetitive if
there is no path with a color pattern that is a repetition. The minimum
number of colors so that a graph has a nonrepetitive edge-coloring is called its Thue edge-chromatic number. 

We improve on the best known general upper bound of $4\Delta-4$ for the Thue edge-chromatic number of trees of maximum degree $\Delta$ 
due to Alon, Grytczuk, Haluszczak and Riordan (2002) by providing a simple nonrepetitive edge-coloring with $3\Delta-2$ colors.
\end{abstract}

%\footnotetext[1]{California State University San Marcos, San Marcos, CA, USA}
%\footnotetext[2]{Supported by ERC Advanced Grant GRACOL, project no. 320812.}

%%%%%%%%%%%%%%%%%%%%%%%%%%%%%%%% Section 1: intro %%%%%%%%%%%%%%%%%%%%%%%%%
\section{Introduction}
\label{sec:intro}

A {\em repetition} is a sequence of even length (for example $abacabac$), such that the first half of the sequence is identical to the second half. 
In 1906 Thue~\cite{T1} proved that there are infinite sequences of 3 symbols that do not contain a repetition consisting of consecutive elements in the sequence. Such sequences are called {\em Thue sequences}. Thue studied these sequences as words that do not contain any square words $ww$ and the interested reader can consult Berstel~\cite{B2,B1} for some background and a translation of Thue's work using more current terminology.
Thue sequences have been studied and generalized in many views (see the survey of Grytczuk~\cite{G}), but in this paper we focus on the natural generalization of the Thue problem to Graph Theory.

In 2002 Alon, Grytczuk, Ha{\l}uszczak and Riordan~\cite{AGHR} proposed calling a coloring of the edges of a graph {\em nonrepetitive} if the sequence of colors on any open path in $G$ is nonrepetitive. We will use $\pi'(G)$ to denote the {\em Thue chromatic index} of a graph $G$, which is the minimum number of colors in a nonrepetitive edge-coloring of $G$. In~\cite{AGHR} the notation $\pi(G)$ was used for the Thue chromatic index, but by common practice we will instead use this notation for the {\em Thue chromatic number}, which is the minimum number of colors in a nonrepetitive coloring of the {\em vertices} of $G$. Their paper contains many interesting ideas and questions, the most intriguing of which is if $\pi(G)$ is bounded by a constant when $G$ is planar. The best result in this direction is due to Dujmovi{\'c}, Frati, Joret, and Wood~\cite{DFJW} who show that for planar graphs on $n$ vertices $\pi(G)$ is $O(\log n)$. Conjecture~2 from~\cite{AGHR} was settled by Currie~\cite{C} who showed that for the $n$-cycle $C_n$, $\pi(C_n)=3$ when $n\ge 18$. One of the conjectures from~\cite{AGHR} that remains open is whether $\pi'(G)=O(\Delta)$ when $G$ is a graph of maximum degree $\Delta$. At least $\Delta$ colors are always needed, since nonrepetitive edge-colorings must give adjacent edges different colors.

In this paper we study the seemingly easy question of nonrepetitive edge-colorings of trees. Thue's sequence shows that if $P_n$ is the path on $n$ vertices, then $\pi'(P_n)=\pi(P_{n-1})\le 3$. (Keszegh, Patk{\'o}s, and Zhu~\cite{KPZ} extend this to more general path-like graphs.) Using Thue sequences Alon, Grytczuk, Ha{\l}uszczak and Riordan~\cite{AGHR} proved that every tree of maximum degree $\Delta\geq 2$ has a nonrepetitive edge-coloring with $4(\Delta-1)$ colors and stated that the same method can be used to obtain a nonrepetitive vertex-coloring with 4 colors. However, while the star $K_{1,t}$ is the only tree whose vertices can be colored nonrepetitively with fewer than 3 colors, it is still unknown which trees need 3 colors, and which need 4 (see Bre{\v{s}}ar, Grytczuk, Klav{\v{z}}ar, Niwczyk, Peterin~\cite{BGKNP}.) Interestingly Fiorenzi, Ochem, Ossona de Mendez, and Zhu~\cite{FOOZ} showed that for every integer $k$ there are trees that have no nonrepetitive vertex-coloring from lists of size $k$. 

Up to this point the only paper we are aware of that narrows the large gap between the trivial lower bound of $\Delta$ colors in a nonrepetitive edge-coloring of a tree of maximum degree $\Delta$ and the $4\Delta-4$ upper bound from~\cite{AGHR} is by Sudeep and Vishwanathan~\cite{SV}. We will describe their results in the next section. The main result of this paper is to give the first nontrivial improvement of the upper bound from~\cite{AGHR}.

\begin{theorem}\label{thm:main}
If $G$ is a tree of maximum degree $\Delta$, then $\pi'(G)\le 3\Delta-2$.
\end{theorem}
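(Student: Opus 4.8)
The plan is to root the tree $G$ at a leaf $r$, so that every vertex has at most $\Delta-1$ children, and to identify each non-root vertex $v$ with its parent edge $e_v$, giving that edge the \emph{level} $\ell(v)$ equal to the distance from $r$ to $v$. I fix once and for all an infinite square-free word $s=s_1s_2\cdots$ over the three letters $\{1,2,3\}$ (a Thue sequence), and for each $v$ a \emph{sibling label} $\beta(v)\in\{1,\dots,\Delta-1\}$ chosen so that the children of a common vertex get distinct labels. The base coloring assigns to $e_v$ the pair $(s_{\ell(v)},\beta(v))$, which uses $3(\Delta-1)$ colors; the final coloring will differ only by a single extra color, yielding the claimed $3\Delta-2$. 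To prove nonrepetitiveness I must show that the color word of every path is square-free.

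Every path $P$ has a unique highest vertex, its \emph{apex} $p$, splitting $P$ into an upward part of length $i$ and a downward part of length $j$; the two edges incident to $p$ (the \emph{apex edges}) both lie at level $\ell(p)+1$, and along $P$ the levels strictly decrease to $\ell(p)+1$ and then strictly increase. In the \emph{monotone} case ($i=0$ or $j=0$) the levels are consecutive integers, so the first coordinates form a factor of $s$ and the word is square-free. In the \emph{unbalanced} case, after reversing $P$ if necessary assume $i<j$, and suppose the word were a square of half-length $k=(i+j)/2$. Since $i<k$, both apex edges lie in the first half and are matched forward to two edges of the downward part at the \emph{consecutive} levels $\ell(p)+k$ and $\ell(p)+k+1$; comparing first coordinates forces $s_{\ell(p)+k}=s_{\ell(p)+1}=s_{\ell(p)+k+1}$, contradicting that consecutive letters of a square-free word differ.

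This leaves the \emph{balanced} case $i=j$. When the half-length $k$ is even, the square condition forces the factor $s_{\ell(p)+1}\cdots s_{\ell(p)+k}$ to be a palindrome of even length; but an even palindrome has two equal central letters, i.e.\ a factor $xx$, which is impossible in $s$. Hence the one remaining possibility is $i=j$ with $k$ \emph{odd}, and this is where I expect the real difficulty to lie: the seam of the square falls exactly at the apex, so the upward color word (read toward $p$) must equal the downward color word, the first coordinates form an \emph{odd} palindrome (which a square-free word may well contain), and the two branches' sibling labels are forced to be reverses of one another. The base coloring can genuinely fail here.

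The hard part, then, is to destroy this reversal symmetry for odd balanced paths, and this is exactly where the construction must be reinforced. The plan is to spend a single additional color to refine the labeling so that no two descending branches from a common apex can carry reverse-equal labels of odd length. I expect the main obstacle to be verifying that \emph{one} extra color suffices for this, in contrast to the $4(\Delta-1)$ bound, which in effect pays $\Delta-1$ extra letters to make the level word palindrome-free outright (a square-free word over four letters with no repeated letter at distance two has no palindromic factor of length $\ge 2$); recovering almost all of that saving, at the cost of only a single color, is what should produce the final count $3\Delta-2$.
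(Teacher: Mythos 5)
Your analysis of the base coloring is correct as far as it goes: the monotone, unbalanced, and balanced-even cases are indeed impossible, by exactly the arguments you give. But the proof stops precisely where the theorem starts. For the balanced case with odd half-length you offer only an intention (``spend a single additional color to refine the labeling so that no two descending branches from a common apex can carry reverse-equal labels of odd length''), with no construction and no verification --- you even flag it yourself as the expected main obstacle. That missing step is not a finishing touch; it is the entire content of the theorem, since everything before it is a routine analysis of the standard level-based (AGHR-style) coloring.

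There is also concrete reason to doubt the plan can be completed with one extra color. Over three letters, any word avoiding both $xx$ and $xyx$ is forced to be $abcabc\cdots$, which contains squares; hence every square-free ternary word of length at least $6$ contains a palindromic factor $xyx$, so your problematic configuration already arises with $k=3$ at infinitely many levels $\ell$ (namely whenever $s_{\ell+1}=s_{\ell+3}$). At such an apex $p$, take any two children $u,w$: in the full $(\Delta-1)$-ary tree the children of $u$ and of $w$ each carry all $\Delta-1$ sibling labels, so there are $u'$ below $u$ and $w'$ below $w$ with $\beta(u')=\beta(w')$, and then $u'$ has a child labeled $\beta(w)$ and $w'$ has a child labeled $\beta(u)$, producing a repetitively colored path of length $6$ through $p$. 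So the extra color would have to intervene near essentially every vertex of the tree, it would have to do so for every odd $k$ with palindromic level factor simultaneously, and one must then check that the heavily used new color creates no repetitions of its own --- a recursive constraint of the same difficulty as the original problem. In effect, your balanced-odd case is asking for palindrome-freeness of the level word, which over three letters is impossible; that is exactly why AGHR pay for a fourth letter and get $4\Delta-4$. The paper escapes this trap by abandoning level-based colorings entirely: it colors ``diagonally'' (the colors below $v$ are obtained by shifting forward in a fixed sequence from the color above $v$), characterizes nonrepetitiveness of such derived colorings by the combinatorial notion of a \speci{} sequence (Theorem~\ref{thm:kspecial}), and then builds \speci{} sequences on $3k+1$ symbols from Thue's stronger theorem on ternary words avoiding $aba$ and $bab$ (Theorems~\ref{thm:3k+1} and~\ref{thm:Thue+}); the ``$+1$'' in $3(\Delta-1)+1$ comes from giving the letter $c$ a block one longer than $a$ and $b$, not from a sparsely used repair color.
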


We will give a proof of this theorem in Section~\ref{sec:main} using a coloring method we describe in Section~\ref{sec:derived} . We discuss some possible ways for further improvements in Section~\ref{sec:improve}. 

%%%%%%%%%%%%%%%%%%%%%%%%%%%%%%% Section 2: small %%%%%%%%%%%%%%%%%%%%%%%%%%%%%

\section{Trees of small height}\label{sec:small}

A $k$-ary tree is a tree with a designated root and the property that every vertex that is not a leaf has exactly $k$ children. The $k$-ary tree in which the distance from the root to every leaf is $h$ is denoted by $T_{k,h}$. For convenience we will assume that the vertices in $T_{k,h}$ are labeled as suggested in Figures~\ref{fig:2,2} and~\ref{fig:2,3} with the root labeled 1, its children labeled $2,\dots,k+1$, their children $k+2,\dots k^2+k+1$ and so on. This allows us to write $u<v$ if $u$ is to the left or above $v$, and also gives the vertices at each level (distance from the root) a natural left to right order.

To obtain bounds on the Thue chromatic index of general trees $G$ of maximum degree $\Delta\ge 2$ it suffices to study $k$-ary trees for $k=\Delta-1$, since $G$ is a subgraph of $T_{k,h}$ for sufficiently large $h$.
Of course the Thue sequence shows that for $h>4$ we have $\pi'(T_{1,h})=\pi'(P_h)=3$, and it is similarly obvious that $\pi'(T_{k,1})=\pi'(K_{1,k})=k$. 
It is easy to see that the next smallest tree $T_{2,2}$ already requires 4 colors, and Figure~\ref{fig:2,2} shows the only two such 4-colorings up to isomorphism. 

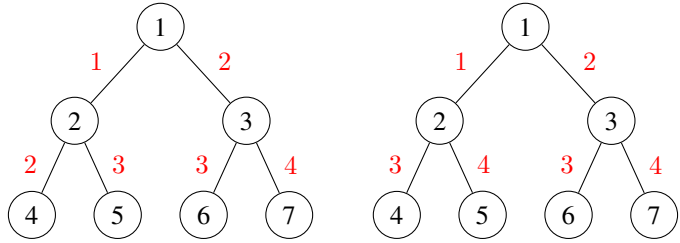
\begin{figure}[htbp]
%\begin{figure}[h]
 \begin{center}
\begin{tikzpicture}[every tree node/.style={draw,circle},
   level distance=1.25cm,sibling distance=.5cm,
   edge from parent path={(\tikzparentnode) -- (\tikzchildnode)}]
\Tree
[.1
    \edge node[auto=right, red,pos=.6] {$1$};
    [.2
       \edge node[auto=right,red,pos=.8] {$2$};
       [.4 ]
       \edge node[auto=left,red,pos=.8] {$3$};
       [.5 ]
        ]
    \edge node[auto=left,red,pos=.6] {$2$};
    [.3
        \edge node[auto=right,red,pos=.8] {$3$};
        [.6 ]
        \edge node[auto=left,red,pos=.8] {$4$};
        [.7 ]
        ]
]
\end{tikzpicture}
\qquad
%\end{center}
\begin{tikzpicture}[every tree node/.style={draw,circle},
   level distance=1.25cm,sibling distance=.5cm,
   edge from parent path={(\tikzparentnode) -- (\tikzchildnode)}]
\Tree
[.1
    \edge node[auto=right, red,pos=.6] {$1$};
    [.2
       \edge node[auto=right,red,pos=.8] {$3$};
       [.4 ]
       \edge node[auto=left,red,pos=.8] {$4$};
       [.5 ]
        ]
    \edge node[auto=left,red,pos=.6] {$2$};
    [.3
        \edge node[auto=right,red,pos=.8] {$3$};
        [.6 ]
        \edge node[auto=left,red,pos=.8] {$4$};
        [.7 ]
        ]
]
\end{tikzpicture}
    \caption{Nonrepetitive 4-edge-colorings of $T_{2,2}$ of type I and II.}
    \label{fig:2,2}
  \end{center}
\end{figure}

The Masters thesis of the second author~\cite{thesis} contains a proof of the fact that the type II coloring of $T_{2,2}$ extends to a unique 4-coloring of $T_{2,3}$ whereas the type I coloring extends to exactly 5 non-isomorphic 4-colorings of $T_{2,3}$, one of which we show in Figure~\ref{fig:2,3}. It is furthermore shown that none of these 6 colorings can be extended to $T_{2,4}$. In fact $\pi'(T_{2,4})=5$ as we can easily extend the coloring from Figure~\ref{fig:2,3} by using color 5 on one of the two new edges at every vertex from $8$ through $15$, and (for example) using colors 1,1,3,4,2,3,2,3 on the other edges in this order.

\begin{figure}[htbp]
\begin{center}
\begin{tikzpicture}[every tree node/.style={draw,circle},
   level distance=1.25cm,sibling distance=.5cm,
   edge from parent path={(\tikzparentnode) -- (\tikzchildnode)}]
\Tree
[.1
    \edge node[auto=right, red,pos=.6] {$1$}; %C(2)
    [.2
       \edge node[auto=right,red,pos=.8] {$2$}; %C(4)
       [.4 
         \edge node[auto=right,red,pos=.8] {$3$}; %C(8)
          [.8 ]
          \edge node[auto=left,red,pos=.8] {$4$}; %C(9)
           [.9 ]
          ] %closes the children of 4
       \edge node[auto=left,red,pos=.8] {$3$}; %C(5)
       [.5 
         \edge node[auto=right,red,pos=.8] {$4$}; %C(10)
          [.10 ]
          \edge node[auto=left,red,pos=.8] {$1$}; %C(11)
           [.11 ]
          ] %closes the children of 5
        ] %closes the children of 2
    \edge node[auto=left,red,pos=.6] {$2$}; %C(3)
    [.3
        \edge node[auto=right,red,pos=.8] {$3$}; %C(6)
        [.6 
          \edge node[auto=right,red,pos=.8] {$4$}; %C(12)
          [.12 ]
          \edge node[auto=left,red,pos=.8] {$1$}; %C(13)
           [.13 ]
          ] %closes the children of 6
        \edge node[auto=left,red,pos=.8] {$4$}; %C(7)
        [.7 
         \edge node[auto=right,red,pos=.8] {$1$}; %C(14)
          [.14 ]
          \edge node[auto=left,red,pos=.8] {$2$}; %C(15)
           [.15 ]
          ] %closes the children of 7
        ] %closes the children of 3
] %closes the children of 1
\end{tikzpicture}
    \caption{Nonrepetitive 4-edge-coloring of $T_{2,3}$.}
    \label{fig:2,3}
  \end{center}
\end{figure}
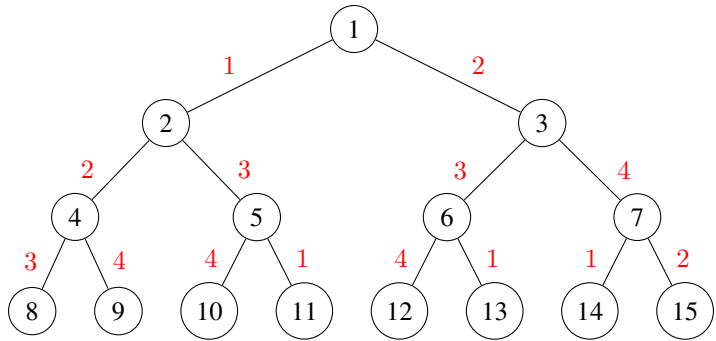

On a more general level, Sudeep and Vishwanathan~\cite{SV} proved that $\pi'(T_{k,2})=\lfloor \frac32 k\rfloor+1$ (compare also Theorem~4 of~\cite{BLMSS}) and $\pi'(T_{k,3})>\frac{\sqrt5 +1}2k>1.618k$. Their lower bounds follow from counting arguments, whereas the construction for $h=2$ consists of giving the edges at the first level colors $0,1,\dots, k-1$ and using all the $\lfloor k/2\rfloor+1$ remaining colors below each vertex at level 1. The remaining $m=\lceil k/2\rceil -1$ edges below the edge of color $i$ are colored with $i+1 \mod k, i+2 \mod k,\dots,i+m \mod k$, in other words cyclically.   

To explain the general upper bound of Alon, Grytczuk, Ha{\l}uszczak and Riordan~\cite{AGHR} we let $T_k$ denote the infinite $k$-ary tree.
It is not difficult to see that $\pi'(T_k)$ is the minimum number of colors needed to color $T_{k,h}$ for every $h\ge 1$. They prove that $\pi'(T_k)\le 4k$ by giving a nonrepetitive edge-coloring of $T_k$ on $4k$ colors as follows: 

Starting with a Thue-sequence $123231\dots$ insert 4 as every third symbol to obtain a nonrepetitive sequence $S=124324314\dots$ that also does not contain a {\em palindrome}, that is a sequence of length at least 2 that reads forwards the same as backwards, such as 121. Now color the edges with a common parent at distance $h-1$ from the root with $k$ different copies $s^{(1)},\dots ,s^{(k)}$ of the symbol $s$ in position $h$ of $S$. For example, the type II coloring in Figure~\ref{fig:2,2} is isomorphic to the first two levels of this coloring of $T_2$ if we replace $1^{(1)},1^{(2)},2^{(1)},2^{(2)}$ by $1,2,3,4$ respectively. It is now easy to verify that this coloring has no repetitively colored paths that are monotone ({\it i.e.} have all vertices at different levels) since $S$ is nonrepetitive, and none with a turning point ({\it i.e.} a vertex whose two neighbors on the path are its children) since $S$ is palindrome-free.   
  
Sudeep and Vishwanathan noted the gap between the bounds $1.618k<\pi'(T_k)\le 4k$, and stated their belief that both can be improved. 
Even for $k=2$ the gap $3.2<\pi'(T_2)\le 8$ is large. Whereas obviously $\pi'(T_2)\ge \pi'(T_{2,4})=5$ is not hard to obtain, the specific question of showing that $\pi'(T_2)<8$ is already raised in~\cite{AGHR} at the end of Section 4.2. Theorem~\ref{thm:main} implies that indeed $\pi'(T_2)\le 7$. On the other hand, improving on the lower bound of 5 (if that is possible) would require different ideas from those in~\cite{SV} because~\cite{thesis} presents a nonrepetitive 5-coloring of $T_{2,10}$ as Example~3.2.6.  

%%%%%%%%%%%%%%%%%%%%%%%%% Section 3: derived %%%%%%%%%%%%%%%%%%%%%%%%%%%%%%%%%%%%%%%

\section{Derived colorings}\label{sec:derived}

In this section, which can also be found in~\cite{thesis}, we present a way to color the edges of $T_k$ that is different from that used by Alon, Grytczuk, Ha{\l}uszczak and Riordan~\cite{AGHR}.
While their idea is in some sense the natural generalization of the type II coloring  in the sense that the coloring precedes by level, our coloring generalizes the type I coloring by moving diagonally. The fact that the type I colorings could be extended in 5 nonisomorphic ways, whereas the extension of the type II coloring was unique encourages this notion.

\begin{definition}
Let $S=s_1,s_2, \dots$ be a sequence. The edge-coloring of a $k$-ary tree $T$ \textbf{derived} from $S$ is obtained as follows:
 The edges incident with the root receive colors $s_1,s_2,\dots ,s_k$ going from left to right in this order. 
If $v$ is any vertex other than the root and if the edge between $v$ and its parent has color $s_i$, then the edges between $v$ and its children receive colors $s_{i+1},s_{i+2},\dots ,s_{i+k}$ again going from left to right in this order. 
\end{definition}

To color the edges of the infinite $k$-ary tree $T_k$ in this fashion we need $S$ to be infinite. To color the edges of $T_{k,h}$ it suffices for the length of $S$ to be at least $kh$ (which is rather small considering that there about $k^h$ edges) as each level will use $k$ entries of $S$ more than the previous level (on the edges incident with the right-most vertex). For example the type I coloring of $T_{2,2}$ is the coloring derived from $S=1,2,3,4$, whereas the coloring of $T_{2,3}$ in Figure~\ref{fig:2,3} is derived from $S=1,2,3,4,1,2$. The next definition will enable us to characterize infinite sequences whose derived coloring is nonrepetitive.

\begin{definition}\label{def:kspecial} 
Let $S=s_1,s_2, \dots $ be a (finite or infinite) sequence. A sequence of indices $i_1,i_2, \dots, i_{2r}$ is called {\bf $k$-bad} for $S$ if there is an $m$ with $1 < m \leq 2r$ such that the following four conditions hold:

\begin{enumerate}[a)]
\item $s_{i_1},s_{i_2}, \dots ,s_{i_{2r}}$ is a repetition 
\item  $i_1 > i_2> \dots > i_m < i_{m+1} < i_{m+2} < \dots < i_{2r}$
\item  $|i_j-i_{j+1}| \leq k$ for all $j$ with $1 \leq j <2r$
\item $i_{m+1}<i_m+k$ if $m<2r$.
\end{enumerate}

$S$ is called \textbf{{\speci}} if it has no $k$-bad sequence of indices.
 \end{definition}

The following proposition says something about the structure of a {\speci} sequence, namely that identical entries must be at least $2k$ apart.

\begin{proposition}\label{prop:distance}
A sequence $S$ has a $k$-bad sequence of length at most four with $m\le 3$ if and only if $s_i=s_j$ for some $i<j< i+2k$.
\end{proposition}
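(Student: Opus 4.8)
The plan is to prove both implications directly, parametrizing everything by the gap $d = j-i$ of a \emph{repeated pair} (a pair of indices $i<j$ with $s_i=s_j$), for which the claimed bound $j<i+2k$ reads $1\le d\le 2k-1$.

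For the backward direction I would assume a repeated pair $i<j$ with $d=j-i\le 2k-1$ and exhibit a short $k$-bad sequence. If $d\le k$ I take the length-two sequence $i_1=j>i_2=i$ with $m=2$: condition (a) holds since $s_i=s_j$, (b) is the single strict inequality $i_1>i_2$, (c) is $j-i\le k$, and (d) is vacuous because $m=2r$. If instead $k<d\le 2k-1$, a single step of size at most $k$ cannot bridge $i$ and $j$, so I would use a length-four sequence with turning point $m=3$, setting $i_1=j$, $i_3=i$, and $i_2=i_4=p$ for a well-chosen intermediate index $p$ with $i<p<j$. The repetition condition then reduces to $s_j=s_i$ (for the pair $i_1,i_3$) together with the automatic equality $s_p=s_p$ (for $i_2,i_4$), while (c),(d) amount to $j-p\le k$, $p-i\le k$, and $p-i<k$. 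It remains to check that the integer interval $\max(i+1,\,j-k)\le p\le \min(j-1,\,i+k-1)$ is nonempty, which comes down exactly to $d\le 2k-1$; the degenerate possibilities ($d<2$ or $k=1$) do not arise in this range.

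For the forward direction I would take a $k$-bad sequence of length $2r\le 4$ with $m\le 3$ and extract a repeated pair at distance less than $2k$. For length two ($m=2$) the pair $i_2<i_1$ itself works, with gap at most $k$. For length four I split on the turning point. When $m=2$ (so $i_1>i_2<i_3<i_4$) the pair $i_2<i_4$ has $s_{i_2}=s_{i_4}$ and $i_4-i_2=(i_4-i_3)+(i_3-i_2)<k+k=2k$, where the strictness uses condition (d) in the form $i_3-i_2<k$. When $m=3$ (so $i_1>i_2>i_3<i_4$) I first try the pair $\{i_2,i_4\}$: conditions (c),(d) force both into $(i_3,\,i_3+k]$ with $i_4$ strictly below $i_3+k$, so $|i_2-i_4|<k$ and this works provided $i_2\ne i_4$. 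If $i_2=i_4$ I instead use $i_3<i_1$ with $s_{i_1}=s_{i_3}$; here $i_4=i_2$ together with (d) gives $i_2-i_3=i_4-i_3<k$, whence $i_1-i_3=(i_1-i_2)+(i_2-i_3)\le k+(k-1)=2k-1<2k$.

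The main obstacle is not the construction but the bookkeeping in the forward direction: a length-four bad sequence need not use four distinct indices, so each of the two repetition equalities $s_{i_1}=s_{i_3}$ and $s_{i_2}=s_{i_4}$ can be trivial, and one must argue that at least one of them yields a genuine repeated pair below distance $2k$. The delicate point is that the bound has to be strict, $j-i<2k$, and the only thing ruling out the extremal value $j-i=2k$ is precisely the turning-point condition (d); tracking where (d) is invoked in each subcase, and confirming it is exactly what kills the boundary case, is the crux of the argument.
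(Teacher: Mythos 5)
Your proof is correct and takes essentially the same approach as the paper: the forward direction is the identical case analysis (length two; length four with $m=2$, using condition d) for the strict bound; length four with $m=3$, split on whether $i_2=i_4$), with the same pairs extracted in each case. The only difference is cosmetic and lies in the second case of the backward direction, where your witness $(j,p,i,p)$ with turning point $m=3$ is the mirror image of the paper's witness $(i+k-1,\,i,\,i+k-1,\,j)$ with $m=2$ — indeed, choosing $p=i+k-1$ from your admissible interval yields exactly the paper's index sequence reversed.
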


\begin{proof}
For the back direction observe that if $j\le i+k$, then the sequence of indices $j,i$ is $k$-bad with $m=2$.
If $i+k\le j< i+2k$, then the sequence $i+k-1,i,i+k-1,j$ is $k$-bad with $m=2$.

For the forward direction, observe that if $i_1,i_2$ is $k$-bad (necessarily with $m=2$), then we can let $j=i_1$ and $i=i_2$. 
If $i_1,i_2,i_3,i_4$ is $k$-bad with $m=2$ then we let $i=i_2$ and $j=i_4$ and observe that $i<i_3<j\le i_3+k\le i+2k-1$.
So we may assume that $i_1,i_2,i_3,i_4$ is $k$-bad with $m=3$. If $i_2=i_4$, then we let $i=i_3$ and $j=i_1$ and obtain $i<i_2<j\le i_4+k-1=i_2+k-1\le i+2k-1$ as desired.  Otherwise $i_2, i_4$ are distinct numbers $x$ with $i_3<x\le i_3+k$ and we can let $\{i,j\}=\{i_2,i_4\}$.
\end{proof}

We are now ready to prove the following.

\begin{theorem}\label{thm:kspecial}
An infinite sequence $S$ is {\speci} if and only if the edge-coloring of $T_k$ derived from $S$ is nonrepetitive. 
\end{theorem}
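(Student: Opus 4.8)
The plan is to set up an exact correspondence between repetitively colored paths in the derived coloring of $T_k$ and $k$-bad index sequences of $S$, and then read the theorem off from this correspondence. For an edge $e$ write $\mathrm{ind}(e)$ for the position of its color in $S$, so that $e$ has color $s_{\mathrm{ind}(e)}$. The definition of the derived coloring supplies the two facts that drive everything: if $e'$ is the parent edge of $e$ then $\mathrm{ind}(e)-\mathrm{ind}(e')\in\{1,\dots,k\}$, and if $e,e'$ are distinct edges joining a common vertex to two of its children then $\mathrm{ind}(e)\ne\mathrm{ind}(e')$ with $|\mathrm{ind}(e)-\mathrm{ind}(e')|\le k-1$. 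Any simple path in a tree rises to a unique highest vertex (its apex) and then descends; traversed in this order, the indices strictly decrease along the ascent and strictly increase along the descent, while the two edges meeting at the apex are two distinct children of it. Thus the index sequence of a path, read in this order, is strictly \emph{valley-shaped} with all consecutive gaps at most $k$, and the gap across the apex is at most $k-1$. Conditions (b), (c), (d) of Definition~\ref{def:kspecial} are precisely the combinatorial shadow of this geometry, and (a) records that the colors $s_{\mathrm{ind}(e)}$ along the path form a repetition.

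First I would prove that if $S$ is not \speci{} then the derived coloring has a repetitive path (this yields ``nonrepetitive $\Rightarrow$ \speci{}'' by contraposition). Given a $k$-bad sequence $i_1,\dots,i_{2r}$ with valley index $m$, I build the path from the apex outward. When $m<2r$ I place the apex $w$ so that its parent edge has index $i_m-1$ --- realizable since a downward leftmost path from the root reaches, at depth $i_m-1$, an edge of exactly this index (and $w$ is the root when $i_m=1$) --- and let $e_m,e_{m+1}$ be children number $1$ and $i_{m+1}-i_m+1$ of $w$; condition (d) gives $i_{m+1}-i_m+1\le k$, so these are two legitimate distinct children with indices $i_m$ and $i_{m+1}$. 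From the lower end of $e_m$ I extend the ascent backwards by repeatedly descending into child number $i_j-i_{j+1}\in\{1,\dots,k\}$ (legal by (c)), and symmetrically for the descent past $e_{m+1}$; the two arms live in disjoint subtrees of $w$, so the result is a genuine path whose edges have indices $i_1,\dots,i_{2r}$ in order. The monotone case $m=2r$ is the same construction without a turn. By (a) the colors along this path form a repetition.

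Next I would prove the converse: any repetitive path forces a $k$-bad sequence (giving ``\speci{} $\Rightarrow$ nonrepetitive''). Reading off the index sequence of a repetitive path, the facts above show it is strictly valley-shaped with gaps at most $k$, so (a) and (c) hold automatically. The only real work is to choose the traversal direction so that the valley sits at a position $m$ with $1<m\le 2r$ and so that (d) holds. Since reversing a path reverses its color sequence and the reverse of a repetition is again a repetition, I am free to orient the path either way. If the index sequence is monotone I orient it to be decreasing and take $m=2r$, making (d) vacuous. Otherwise the valley is strictly interior at one of the two apex edges; I orient the path so that the smaller of the two apex indices comes first, so that the larger apex index becomes $i_{m+1}$ and (d) reduces to the already-known bound $i_{m+1}-i_m\le k-1<k$; a short count shows this orientation also forces $m\ge 2$. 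The short repetitions of length at most $4$ --- in particular two adjacent edges of equal color --- are exactly the situation controlled by Proposition~\ref{prop:distance}, which serves as a useful check that identical colors must lie far apart.

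I expect the main obstacle to be the bookkeeping at the apex: pinning down, in each configuration, an orientation that simultaneously meets the constraint $m>1$ of Definition~\ref{def:kspecial} and condition (d), while correctly accounting for the degenerate cases in which one arm has length one (these collapse the valley and make the index sequence monotone). The realization step also needs care to confirm that $T_k$ always has enough room to host the constructed path with the prescribed indices, but this follows cleanly from the leftmost-path observation above.
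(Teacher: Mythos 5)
Your proposal is correct and follows essentially the same route as the paper's proof: the same observation that index sequences of paths are valley-shaped with gaps at most $k$ and sibling gaps at most $k-1$, the same case split (monotone versus turning point) to extract a $k$-bad sequence from a repetitively colored path, and the same leftmost-descent-from-the-root construction to realize a $k$-bad sequence as a repetitively colored path. If anything, your explicit orientation rule at the apex (putting the smaller apex index first so that condition (d) and $m\ge 2$ both hold, and treating the degenerate turning-point-next-to-an-endpoint case as monotone) is handled more carefully than in the paper's Case 2, which glosses over that edge case.
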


\begin{proof} $(\Rightarrow)$ Suppose that a  {\speci}  sequence $S$ creates a repetition on a path $P=v_0,v_1,\dots, v_{2r}$ in $T_k$, that is $R=c(v_0v_1),c(v_1v_2),\dots,c(v_{2r-1}v_{2r})$ satisfies $c(v_iv_{i+1})=c(v_{i+r}v_{i+r+1})$ for $0 \leq i \leq r-1$. Observe that $c(v_jv_{j+1})=s_{i_{j+1}}$ where $0 \leq j \leq 2r-1$, for some $s_{i_{j+1}} \in S$. There are two possibilities; $v_0,v_1,\dots, v_{2r}$ is monotone or it has a single turning point.

\textbf{Case 1:} Suppose $v_0,v_1,\dots, v_{2r}$ is monotone.\\
If $v_0,v_1,v_2 \dots, v_{2r}$ is monotone then we may assume $v_0>v_1>v_2>\dots>v_{2r}$. Since $v_j>v_{j+1}$ we know that $v_j$ is the child of $v_{j+1}$ so we have that $i_j>i_{j+1}$ and $|i_j-i_{j+1}|\leq k$. The subsequence $s_{i_1},s_{i_2},\dots, s_{i_{2r}}$ is a repetition, so that $i_1,\dots,i_{2r}$ is $k$-bad with $m=2r$, a contradiction.

\textbf{Case 2:} Suppose $v_0,v_1,\dots, v_{2r}$ has a turning point $v_m$ for some $m$ with $0<m<2r$.
 By the definition of a turning point $v_{m-1}$ and $v_{m+1}$ are the children of $v_m$, and thus $v_0>v_1>\dots>v_{m-1} >v_m <v_{m+1}< \dots <v_{2r}$. We may also assume without loss of generality that $v_{m-1}<v_{m+1}$. Observe that $v_0,v_1,\dots, v_{m}$ is moving towards the root and $v_m,v_{m+1},\dots, v_{2r}$ is moving away from the root. Let $c(v_jv_{j+1})=s_{i_{j+1}}$. We will show that $i_1>i_2>\dots >i_{m-1}>i_m<i_{m+1}<\dots< i_{2r}$ and that this sequence is $k$-bad for $S$. Since $v_{j-1} > v_j > v_{j+1}$ for $1 \leq j < m$ we know that $v_j$ is the child of $v_{j+1}$ and the parent of $v_{j-1}$ so we have $i_j>i_{j+1}$ and $|i_j-i_{j+1}|\leq k$.  Similarly, since $v_{j-1}<v_j<v_{j+1}$ for $m < j < 2r$ we know that $v_{j}$ is the child of $v_{j-1}$ and the parent of $v_{j+1}$ so $i_j<i_{j+1}$ and $|i_j-i_{j+1}| \leq k$. Finally, since $v_m$ is the parent of $v_{m-1}$ and $v_{m+1}$ so $|i_m-i_{m+1}|<k$ and $i_m < i_{m+1}$ since we assumed $v_{m-1} < v_{m+1}$. The subsequence $s_{i_1},s_{i_2},\dots, s_{i_{2r}}$ is a repetition, leading to the contradiction that $i_1,\dots,i_{2r}$ is $k$-bad.

 $(\Leftarrow)$ We proceed by contrapositive. So suppose $S$ has a $k$-bad sequence ${i_1},{i_2}, \dots ,{i_{2r}}$. We will show that there is a path  on vertices $v_0,v_1,v_2,\dots, v_{2r}$ with $c(v_jv_{j+1})=s_{i_{j+1}}$ where the color pattern $c(v_0v_1),c(v_1v_2)\dots,c(v_{2r-1}v_{2r})$ is a repetition in the derived edge-coloring of $T_k$. The left child of a vertex $v$ is the child with the smallest label, and we will denote this child as $v'$. Observe that if $c(vp(v))=s_\alpha$, then $c(vv')=s_{\alpha+1}$. 

If $m=2r$ then we start at the root and successively go to the left child of the current vertex until we find a vertex $v_{2r}$ such that $c(v_{2r}v_{2r}')=s_{i_{2r}}$ and let $v_{2r-1}=v_{2r}'$. Let $v_{2r-2}$ be the child of $v_{2r-1}$ with $c(v_{2r-1}v_{2r-2})=s_{i_{2r-1}}$ (this exists since  $|i_j-i_{j+1}| \leq k$). We continue in this way until we have found $v_{0}$. Now observe that the color pattern of $v_0,v_1,\dots, v_{2r}$ is  $s_{i_1},s_{i_2}, \dots ,s_{i_{2r}}$ as desired.

 If $m < 2r$ then we start at the root and successively go to the left child of the current vertex until we find a vertex $v_m$ such that $c(v_mv'_m)=s_{i_m}$ and let $v_{m-1}=v_{m}'$. Let $v_{m+1}$ be the child of $v_m$ with $c(v_mv_{m+1})=s_{i_{m+1}}$ (this exists since $i_m<i_{m+1}<i_m+k$).  Now, for $0 \leq p \leq (m-1)$ we successively find a child $v_{p-1}$ of $v_p$ such that $c(v_pv_{p-1})=s_{i_p}$. The existence of $v_{p-1}$ is guaranteed by the fact $|i_p-i_{p-q}|\leq k$ as in the case $m=2r$. For $m+1\leq q \leq 2r$ we successively find a child $v_{q+1}$ of $v_q$ such that $c(v_qv_{q+1})=s_{i_{q-1}}$ which we can do since $|i_q-i_{q+1}|\leq k$. Now observe that the color pattern of $v_0,v_1,\dots, v_{2r}$ is  $s_{i_1},s_{i_2}, \dots ,s_{i_{2r}}$ as desired.
\end{proof}

\begin{remark}\label{rem:finite}
Observe that the proof of the forward direction also works for the finite case $T_{k,h}$, a fact we will use in Section~\ref{sec:improve}. However, the back direction need not hold in this case: We already mentioned that the coloring derived from $S=1,2,3,4,1,2$ in Figure~\ref{fig:2,3} is nonrepetitive (see also $k=2$ in Proposition~\ref{2k}), but this sequence $S$ is not 2-special, because the index-sequence $3,1,2,3,5,6$ is $2$-bad. 
\end{remark}

Thus to get a good upper bound on $\pi'(T_k)$ we just need an infinite {\speci} sequence with few symbols. 
As every $2k$ consecutive elements must be distinct, the following simple idea turns out to be useful: from a sequence $S$ on $q$ symbols we can form a sequence $S^{(w)}$ on $qw$ symbols by replacing each symbol $t$ in $S$ by a block $T=t^{(0)},t^{(1)},\dots t^{(w-1)}$ of $w$ symbols. In~\cite{thesis} it is shown that if $S$ is nonrepetitive and palindrome-free then $S^{(k)}$ is \speci. This gives a new proof of the result from~\cite{AGHR} that $\pi'(T_k)\le 4k$. In the next section we will improve on that.

%%%%%%%%%%%%%%%%%%%%%%%%%%%%% Sectyion 4: main %%%%%%%%%%%%%%%%%%%%%%%%%%%%%%%%%%%%

\section{Main result}\label{sec:main}

We begin with the simple observation, that if $S$ is a sequence then $S^{(k+1)}=S^+$ has the property that if $i,j$ are indices with $s^+_i=x^{(u)}$ and $s^+_j=y^{(v)}$ then $i<j\le i+k$ implies that either $x=y$ and $u<v$, or $s^+_i$ and $s^+_j$ are in consecutive blocks $XY$ of $S^+$ and $u>v$. In other words we can tell whether we are moving left or right through the sequence just by looking at the superscripts (as long as consecutive symbols in $S$ are distinct.) As a starting point we immediately get the following result.

\begin{corollary}\label{cor:3k+3}
For all $k\ge 1$, $\pi'(T_k)\le 3k+3$.
\end{corollary}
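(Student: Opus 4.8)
The plan is to produce an infinite \speci{} sequence on just $3k+3$ symbols and then appeal to Theorem~\ref{thm:kspecial}. By Thue's theorem~\cite{T1} there is an infinite nonrepetitive (square-free) sequence $S$ over the three symbols $\{1,2,3\}$; in particular consecutive entries of $S$ are distinct, so the observation preceding this corollary applies to $S^+:=S^{(k+1)}$. As $S^+$ is obtained by splitting each of the three symbols into a block of $k+1$ copies, it uses exactly $3(k+1)=3k+3$ symbols. Everything therefore reduces to one claim: \emph{if $S$ is square-free then $S^+=S^{(k+1)}$ is \speci}.

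To prove this claim I would argue by contradiction. Suppose $S^+$ has a $k$-bad index sequence $i_1,\dots,i_{2r}$ with turning point $m$. Let $p_t$ be the block of $S^+$ containing $i_t$, so that $s^+_{i_t}$ has base symbol $s_{p_t}$ and superscript $\sigma_t$. Condition (a) gives $s^+_{i_t}=s^+_{i_{r+t}}$ for all $t$, hence $\sigma_t=\sigma_{r+t}$ and $s_{p_t}=s_{p_{r+t}}$. Condition (c) with $k<k+1$ forces consecutive indices into equal or adjacent blocks, so the block numbers change by at most one per step and are monotone along each arm (nonincreasing for $1\le t\le m$, nondecreasing for $m\le t\le 2r$); moreover, by the observation the superscripts alone record whether a step stays inside a block or crosses into a neighbour.

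In the monotone case $m=2r$ this bookkeeping already suffices. Writing $d_t=i_t-i_{t+1}\in\{1,\dots,k\}$ and a crossing indicator $b_t\in\{0,1\}$, we have $\sigma_{t+1}=\sigma_t-d_t+(k+1)b_t$. Comparing the two halves through $\sigma_t=\sigma_{r+t}$ yields $d_t-(k+1)b_t=d_{r+t}-(k+1)b_{r+t}$, and since $|d_t-d_{r+t}|<k+1$ this forces $b_t=b_{r+t}$ and $d_t=d_{r+t}$. Consequently $p_t-p_{r+t}$ equals a constant $c\ge 1$, so $s_q=s_{q-c}$ for every block $q$ visited by the first half; these $q$ fill the interval $[p_r,p_1]$ and $p_1-p_r\ge c-1$, so the factor of $S$ on $[p_r-c,\,p_1]$ has length at least $2c$ and period $c$ and therefore contains a square $ww$ with $|w|=c$---contradicting square-freeness.

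The turning-point case $m<2r$ is where I expect the real difficulty, and it is exactly why the blow-up factor must be $k+1$ rather than $k$. Here the path folds once, at the minimum index $i_m$, and a naive reflection of the descending arm would only manufacture a palindrome in $S$, which square-freeness is powerless against. The role of condition (d), $i_{m+1}<i_m+k$, is that the extra slot in each length-$(k+1)$ block lets the superscripts $\sigma_m,\sigma_{m+1}$ pin down the fold relative to the block boundaries without ambiguity. I would then run the same matching of the quantities $d_t$ and $b_t$ along each arm as in the monotone case, paying careful attention to the single direction reversal at $m$ and to the boundary steps $t=m,m+1$, so as to again force $p_t-p_{r+t}$ to be constant and extract a genuine square in $S$. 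Turning this fold-accounting into a clean argument is the main obstacle, with the monotone computation serving as the template.
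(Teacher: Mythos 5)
Your overall setup is the same as the paper's---reduce to showing that $S^+=S^{(k+1)}$ is {\speci} whenever $S$ is an infinite Thue sequence on three symbols, then invoke Theorem~\ref{thm:kspecial}---and your treatment of the monotone case $m=2r$ is correct (the paper dispatches it in one sentence, by erasing superscripts and merging equal consecutive terms to get a repetition in $S$; your $d_t,b_t$ bookkeeping proves the same thing more laboriously). The genuine gap is the turning-point case $m<2r$, which you explicitly leave open as ``the main obstacle.'' That case is the entire point of the corollary, and your plan for it aims at the wrong target: when $m<2r$ there is no square to extract from $S$. The case is simply impossible, and the contradiction is \emph{local}---it comes from a single pair of matched steps with opposite directions---not from globally forcing $p_t-p_{r+t}$ to be constant.

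Here is how the paper closes it, using exactly the direction-detection observation you quote. Since all symbols within distance $2k$ of each other in $S^+$ are distinct, Proposition~\ref{prop:distance} gives $r>1$; now choose $j$ with $1\le j<r$ so that the first half descends at $j$ while the second half ascends at $r+j$ (take $j=1$ when $m\le r$, and $j=m-r$ when $r<m<2r$, the turning step at $m$ having gap at most $k$ by conditions c) and d) of Definition~\ref{def:kspecial}). Write $s^+_{i_j}=s^+_{i_{r+j}}=x^{(u)}$ and $s^+_{i_{j+1}}=s^+_{i_{r+j+1}}=y^{(v)}$. If $x=y$, both steps must stay inside one block (adjacent blocks carry distinct letters, as $S$ is square-free), so the descending step forces $u>v$ while the ascending step forces $u<v$; if $x\neq y$, the two steps force consecutive blocks in the orders $YX$ and $XY$ respectively, giving $u<v$ and $u>v$. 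Either way a contradiction, with no fold analysis and no appeal to palindrome-freeness. In fact your own computation nearly contains this: at matched steps with opposite directions, your superscript relation reads $d_j+d_{r+j}=(k+1)(b_j+b_{r+j})$, forcing exactly one of $b_j,b_{r+j}$ to be $1$; then one matched step stays inside a block (equal base letters at its endpoints) while the other crosses a boundary (distinct base letters), contradicting $s_{p_j}=s_{p_{r+j}}$ and $s_{p_{j+1}}=s_{p_{r+j+1}}$. One more paragraph in this spirit would have finished your proof; as submitted it is incomplete precisely where the $(k+1)$-fold blow-up earns its keep.
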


\begin{proof}
It is enough to show that $S^+$ on $3(k+1)$ is {\speci} whenever $S$ is an infinite Thue sequence on 3 symbols. Suppose there is a $k$-bad sequence of indices $i_1,\dots,i_{2r}$. Since every sequence of $2(k+1)$ consecutive symbols in $S^+$ is distinct we get that $r>1$ by Proposition~\ref{prop:distance}. If $m<2r$, then we can find an index $j$ such that $i_j>i_{j+1}$ and $i_{r+j}<i_{r+j+1}$ with $s_{i_j}=s_{i_{r+j}}=x^{(u)}$ and $s_{i_{j+1}}=s_{i_{r+j+1}}=y^{(v)}$. Indeed, if $2<m\le r$ we let $j=1$, and otherwise we let $j=m-r$. In this case $x=y$ and $u\le v$ would violate  $i_j>i_{j+1}\ge i_j-k$, whereas $u\ge v$ would violate $i_{r+j}<i_{r+j+1}\le i_{r+j}+k$. Similarly if $x\neq y$, then $u\ge v$ would violate  $i_j>i_{j+1}\ge i_j-k$, whereas $u\le v$ would violate $i_{r+j}<i_{r+j+1}\le i_{r+j}+k$.  

It remains to observe that in the case when $m=2r$ the sequence $s_{i_1},s_{i_2}, \dots ,s_{i_{2r}}$ in $S^+$ yields a repetition in $S$ by erasing the superscripts and merging identical consecutive terms where necessary.  
\end{proof}

This bound can be improved to $3k+2$ by removing all symbols of the form $a^{(0)}$ from $S^+$ for one of the symbols $a$ from $S$ and showing that the resulting sequence is still \speci. However, we can do a bit better. In fact, Theorem~\ref{thm:main} follows directly from our main result in this section.

\begin{theorem}\label{thm:3k+1}
There are arbitrarily long {\speci} sequences on $3k+1$ symbols.
\end{theorem}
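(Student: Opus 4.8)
The goal is to construct arbitrarily long $k$-special sequences on $3k+1$ symbols, improving the $3(k+1)$ count from Corollary~\ref{cor:3k+3} by exactly two. The natural starting point is the block-substitution construction $S^+ = S^{(k+1)}$ where $S$ is a Thue sequence on the three symbols, which Corollary~\ref{cor:3k+3} already shows is $k$-special on $3(k+1) = 3k+3$ symbols. My plan is to prune two symbols from the alphabet of $S^+$ and verify that the surviving sequence remains $k$-special. The excerpt itself hints at the first reduction: deleting all occurrences of $a^{(0)}$ for a single fixed symbol $a \in \{1,2,3\}$ drops the alphabet to $3k+2$. The task is therefore to find a second symbol whose deletion is also safe, so I would aim to delete (say) both $a^{(0)}$ and $b^{(0)}$ for two of the three base symbols, landing at $3k+1$.

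\textbf{Key steps in order.} First I would fix an explicit Thue sequence $S$ on $\{1,2,3\}$ and form $S^+$, then define the pruned sequence $S^*$ by deleting every symbol $a^{(0)}$ and $b^{(0)}$ (for two chosen base letters), leaving the blocks of the third letter $c$ intact and shortening the other two blocks to start at superscript $1$. Second, I would re-index $S^*$ and track how deletion affects the two quantities that $k$-badness constrains: consecutive index-gaps $|i_j - i_{j+1}|$ and the direction-detection via superscripts that drives the proof of Corollary~\ref{cor:3k+3}. The crucial point is that the argument in Corollary~\ref{cor:3k+3} used the block width $k+1$ only to guarantee that within a window of size $k$ the superscripts strictly reveal left/right motion (same block means $u<v$, adjacent blocks means $u>v$). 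After deletion the blocks have width $k+1$, $k$, or $k$ (for $c$, $a$, $b$ respectively, with the $a,b$ blocks missing their $0$-superscript), so I must re-examine whether a $k$-window can still straddle a deleted gap and whether the gap bound $|i_j - i_{j+1}| \le k$ can still be met across the shortened blocks. Third, I would redo the case analysis from Corollary~\ref{cor:3k+3}: the $m=2r$ case reduces to $S$ being repetition-free and survives unchanged, while for $m < 2r$ I would re-derive the contradiction by finding the pivot index $j$ and comparing superscripts, now accounting for the irregular block widths.

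\textbf{Main obstacle.} The hard part will be controlling the gap condition (c) across the shortened blocks. With uneven block widths the clean ``same block $\Rightarrow u<v$, adjacent block $\Rightarrow u>v$'' dichotomy can blur: a $k$-step jump might now land inside the next block rather than just at its boundary, or skip past a short block entirely, so the superscript no longer cleanly certifies direction. I expect the proof to hinge on a careful choice of \emph{which} two base letters to prune — the deletions must be arranged so that no two deleted $0$-superscripts ever sit in adjacent blocks of $S$ (which is why one keeps a full-width block for the third letter as a buffer), and so that every length-$(2k)$ window still contains distinct symbols, preserving the $r>1$ reduction via Proposition~\ref{prop:distance}. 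Verifying that the Thue sequence's palindrome-free or square-free structure prevents the deletions from creating a new $k$-bad index sequence is where the real work lies; the remaining bookkeeping on superscripts should then follow the template of Corollary~\ref{cor:3k+3} once the block geometry is pinned down.
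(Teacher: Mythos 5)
Your construction is exactly the one the paper uses: keep the $c$-blocks at full width $k+1$ and shorten the $a$- and $b$-blocks to $a^{(1)},\dots,a^{(k)}$ and $b^{(1)},\dots,b^{(k)}$, giving $3k+1$ symbols. However, your plan for closing the case analysis has a genuine gap, and it sits exactly where you located the ``real work.'' You propose to choose the two pruned letters so that ``no two deleted $0$-superscripts ever sit in adjacent blocks,'' i.e.\ so that $a$ and $b$ are never adjacent in the base sequence $S$. This is impossible for long sequences: in a square-free ternary word in which $a$ and $b$ are never adjacent, every second letter must be $c$, and square-freeness then forces the interleaved non-$c$ letters to alternate $a,b,a,b,\dots$, so the word contains the square $(acbc)(acbc)$ as soon as its length reaches $8$. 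Hence no choice of which two letters to prune can satisfy your condition, and the plan cannot be completed as stated.

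The ingredient actually needed is weaker but not free: the base sequence must avoid the factors $aba$ and $bab$ (for the two pruned letters only). Adjacent blocks $AB$ are harmless; what kills the construction is an alternation $ABA$ or $BAB$ of the shortened blocks, because such a configuration contains an explicit $k$-bad index sequence of length $6$ with $m=2$: reading $b^{(1)},a^{(1)},a^{(2)},b^{(1)},a^{(1)},a^{(2)}$ by jumping back $k$ from the first $B$-block into the first $A$-block, then forward to $b^{(1)}$ again and on into the second $A$-block --- the same pattern the paper exhibits for $S_{2k,2}$ in Section~\ref{sec:improve}. Note this bad sequence has $r=3$, so it is invisible to the ``all $2k$-windows are distinct'' check via Proposition~\ref{prop:distance} that you rely on. Ordinary Thue (square-free) sequences do contain factors of the form $aba$, so your starting point is insufficient; the paper instead invokes a second, stronger theorem of Thue (Theorem~\ref{thm:Thue+}) guaranteeing arbitrarily long square-free ternary sequences avoiding $aba$ and $bab$, and its case analysis (via the Claim that any pivot pair on the decreasing/increasing sides must have equal superscripts, lie in $\{a,b\}$-blocks, and be exactly $k$ apart) shows that any $k$-bad sequence with $1<m<2r$ forces exactly such a forbidden alternation. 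Without identifying and importing that stronger theorem, the superscript bookkeeping you outline cannot close the argument.
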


One difficulty is that removing two symbols from $S^+$ can easily result in the sequence not being {\speci} anymore. To make the proof work we need to start with a Thue sequence with additional properties. The following result was proved by Thue~\cite{T2} and reformulated by Berstel~\cite{B2,B1} using modern conventions.

\begin{theorem}\label{thm:Thue+}
There are arbitrarily long nonrepetitive sequences with symbols $a, b, c$ that do not contain $aba$ or $bab$.
\end{theorem}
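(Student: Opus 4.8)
The plan is to prove Theorem~\ref{thm:Thue+}, the existence of arbitrarily long nonrepetitive ternary sequences avoiding the factors $aba$ and $bab$, by exhibiting an explicit morphic construction and verifying its output has the required properties. First I would set up a uniform substitution (morphism) $\phi$ on the alphabet $\{a,b,c\}$ and iterate it starting from a single letter to obtain arbitrarily long words; the natural candidate is a square-free morphism whose images are chosen so that the forbidden patterns $aba$ and $bab$ cannot arise. Concretely, I would look for block images such that no image contains $aba$ or $bab$ internally, and such that the overlaps created when two image blocks are concatenated also never produce these patterns. Since the letter $c$ is plentiful in a well-chosen morphism, the simplest safeguard is to ensure that the letters $a$ and $b$ are always separated by a $c$ in the fixed point, which immediately kills both $aba$ and $bab$.

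The key steps, in order, are as follows. First I would choose the morphism explicitly and confirm by direct inspection that each image word $\phi(a),\phi(b),\phi(c)$ is itself square-free and contains neither $aba$ nor $bab$. Second, I would invoke (or re-derive) the standard criterion for a morphism to be square-free preserving: a uniform morphism on three letters whose images are square-free and which is ``pre-square-free'' (no image is a factor of another, and the short crossing conditions hold) maps square-free words to square-free words; applying this to the square-free ternary Thue word gives an infinite square-free fixed point. Third, I would verify the \emph{avoidance} of $aba$ and $bab$ is preserved under concatenation of images: any occurrence of such a length-three factor must either lie inside one image block (ruled out in step one) or straddle a block boundary, and I would check the finitely many boundary cases $\mathrm{suffix}(\phi(x))\cdot\mathrm{prefix}(\phi(y))$ for all pairs $x,y$. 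Since there are only $9$ ordered pairs and each check is a short finite computation, this is mechanical. Finally, because the fixed point is infinite, truncating it yields nonrepetitive words of every length.

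The main obstacle I expect is choosing a morphism that simultaneously preserves square-freeness \emph{and} enforces the $c$-separation (or more directly the avoidance of $aba,bab$) at block boundaries; these two demands pull in slightly different directions, since square-free morphisms tend to use all three letters densely while the separation condition restricts how $a$ and $b$ may appear. The cleanest resolution is to design $\phi$ so that every image $\phi(x)$ both begins and ends with $c$ (or more precisely so that $a$ and $b$ never become adjacent or sandwiched across a boundary), which forces the boundary concatenations to start and end in $c$ and thereby localizes the verification entirely inside the blocks. An alternative, if a clean explicit morphism proves awkward to tune by hand, is simply to cite Thue's original result as already reformulated by Berstel~\cite{B2,B1}, since the statement is exactly Thue's and the construction is classical; in the interest of a self-contained argument, however, I would prefer to present the explicit morphism and the short finite boundary check.
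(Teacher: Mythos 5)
Your primary, self-contained plan has a genuine gap: the safeguard you propose for killing $aba$ and $bab$ is incompatible with square-freeness. Suppose a ternary word is square-free and the letters $a$ and $b$ are never adjacent. Since $cc$ is a square, consecutive occurrences of letters from $\{a,b\}$ must be separated by exactly one $c$, so the word has the form $x_1cx_2cx_3c\cdots$ with $x_i\in\{a,b\}$; square-freeness forces $x_i\neq x_{i+1}$ (else $x_icx_{i+1}c=(x_ic)^2$), hence the $x_i$ alternate, and then $x_icx_{i+1}cx_{i+2}cx_{i+3}c=(x_icx_{i+1}c)^2$ is a square. So every square-free ternary word in which $a$ and $b$ are never adjacent has length at most $7$, and no infinite fixed point with this property can exist. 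Your proposed repair, choosing $\phi$ so that every image both begins and ends with $c$, fails even faster: every block boundary then produces the square $cc$. Moreover, no explicit morphism is ever exhibited, so there is nothing to run your (otherwise reasonable) verification scheme on, and the tension you acknowledge between square-freeness and pattern avoidance is real: Thue's classical square-free fixed point of $a\to abc$, $b\to ac$, $c\to b$ already begins $abcacbab\cdots$ and so contains $bab$, so no off-the-shelf square-free morphism will do.

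It is also worth seeing why the problem resists a direct three-letter substitution. A square-free word avoiding $aba$ and $bab$ (and starting with $c$) decomposes into blocks $ca$, $cb$, $cab$, $cba$, but squares in block concatenations can span several blocks (for instance $cab\cdot cb\cdot ca$ contains $bcbc$), so the admissible block sequences are not simply the square-free ones; this is precisely why the construction the paper sketches — Thue's, in Berstel's reformulation — first builds a word over five auxiliary symbols $A,B,C,D,E$ by a substitution and only then translates into the four blocks. Your fallback option of citing Thue and Berstel is in fact exactly what the paper does (the paper attributes the theorem and gives only an unverified sketch of the construction), so that route is acceptable; but as a self-contained argument, your proposal is missing its key object, and the stated recipe for finding it provably cannot succeed. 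A repaired version would need either Thue's two-stage construction or a (computer-searched) three-letter morphism whose images lie in the language and pass both the square-freeness criterion and the boundary checks.
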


To give an idea of how such a sequence can be found, observe that it must be built out of blocks of the form $ca, cb, cab,$ and $cba$ which we denote by $x, y, z, u$, respectively. (In fact, Thue primarily studied two-way infinite sequences, but for our purposes we may simply assume our sequence starts with $c$.) 
We first build a sufficiently long sequence on the 5 symbols $A, B, C, D, E$ by starting with the sequence "B" and then in each step simultaneously replacing each letter as follows:

\begin{center}
\begin{tabular}{l|l|l|l|l|l}
Replace & A & B & C & D & E\\
\hline
by  & BDAEAC & BDC & BDAE & BEAC & BEAE \\
\end{tabular}
\end{center}

In the resulting sequence we then let $A=zuyxu$, $B=zu$, $C=zuy$, $D=zxu$, $E=zxy$. Lastly we replace $x$, $y$, $z$ and $u$ as aforementioned. 
For example, from $B$ we obtain $BDC$, and then 
after a second step $BDCBEACBDAE$. This translates to the intermediate sequence 

\noindent $zuzxuzuyzuzxyzuyxuzuyzuzxuzuyxuzxy$, which gives us the desired sequence 

\noindent $cabcbacabcacbacabcbacbcabcbacabcacbcabcbacbcacbacabcbacbcabcbacabcacbacabcbacbcacbacabcacb$.

It is worth pointing out that Thue's work goes deeper in that he essentially characterizes all two-way infinite sequences that meet the conditions from Theorem~\ref{thm:Thue+} as well as several other related sequences. We also want to mention that the $A,B,C$ in the following proof have nothing to do with the $A,B,C$ in the previous paragraph, but we wanted  to maintain the notation used in~\cite{B2,B1}.

\begin{proof}[of Theorem~\ref{thm:3k+1}]
Start with an infinite sequence $S$ in the form of Theorem~\ref{thm:Thue+} and replace each occurrence of $c$ by a block $C$ of $k+1$ consecutive symbols $c^{(0)},c^{(1)},\dots,c^{(k)}$, whereas we replace each occurrence of $a$ or $b$ by shorter blocks $A=a^{(1)},\dots,a^{(k)}$ and $B=b^{(1)},\dots,b^{(k)}$ respectively. We claim that the resulting sequence $S'$ on $3k+1$ symbols is \speci. So suppose there is a $k$-bad sequence of indices $i_1,\dots,i_{2r}$. As before when $m=2r$ the sequence $s_{i_1},s_{i_2}, \dots ,s_{i_{2r}}$ in $S'$ yields a repetition in $S$ by erasing the superscripts and merging identical consecutive terms where necessary, as we can not "jump" over any of the blocks $A, B$ or $C$ in $S'$. So we may assume that $1<m<2r$, and since every $2k$ consecutive elements are distinct Proposition~\ref{prop:distance} implies that $r>2$. 

{\bf Claim:} If there is an index $j$ with $0<j< r$ such that $i_j>i_{j+1}$ and $i_{r+j}<i_{r+j+1}$, then $s_{i_j}=s_{i_{r+j}}=x^{(u)}$ and $s_{i_{j+1}}=s_{i_{r+j+1}}=y^{(u)}$ for $1\le u\le k$ and $\{x,y\}=\{a,b\}$. Consequently, $i_j-i_{j+1}=k=i_{r+j+1}-i_{r+j}$.

Indeed, $s_{i_j}=s_{i_{r+j}}=x^{(u)}$ and $s_{i_{j+1}}=s_{i_{r+j+1}}=y^{(v)}$ for some $u,v,x,y$. If $x=y$, then $u\le v$ would violate  $i_j>i_{j+1}\ge i_j-k$, whereas $u\ge v$ would violate $i_{r+j}<i_{r+j+1}\le i_{r+j}+k$. Thus $x\neq y$. Now $u> v$ would violate  $i_j>i_{j+1}\ge i_j-k$, whereas $u< v$ would violate $i_{r+j}<i_{r+j+1}+k$. So we may assume that $u=v$. If $x=c$, then this would violate $i_{j}>i_{j+1}\ge i_j+k$ (as the presence of $c^{(0)}$ means that the distance is $k+1$). Similarly if $y=c$, then this violates $i_{r+j}<i_{r+j+1}\le i_{r+j}+k$. Hence we must have $\{x,y\}=\{a,b\}$ finishing the proof of the claim.

If $r<m<2r$, then we can apply the claim with $j=m-r$ and obtain consequently that $i_{m+1}-i_m=k$, in direct contradiction to condition d) from Definition~\ref{def:kspecial}.

So we suppose that $2 \le m\le r$. In this case we will let $j=m-1$ in our claim and we may assume due to the symmetry of $S$ in $a,b$ that $x=a$ and $y=b$. Thus for some $u$ with $1\le u\le k$ we get $s_{i_{m-1}}=a^{(u)}=s_{i_{m+r-1}}$ and $s_{i_m}=b^{(u)}=s_{i_{m+r}}$. If $m>2$, then we may apply the claim again with $j=m-2$ to obtain that $s_{i_{m-2}}=b^{(u)}=s_{i_{m+r-2}}$. However, the fact that $i_{m-2}>i_{m-1}>i_m$ correspond to symbols $b^{(u)},a^{(u)},b^{(u)}$ means that $S'$ must have consecutive blocks $BAB$, yielding a contradiction to the fact that in $S$ we had no consecutive symbols $bab$. 

So we may assume that $m=2$. 
%If $r=2$ as well, then $i_m<i_{m+1}<i_{m+2}$ corresponds to the symbols $b^{(u)},a^{(u)},b^{(u)}$ and again we have a contradiction. Finally if $r>2$, then since 
Since $r>2$ and $s_{i_2}=b^{(u)}$ and $i_2<\dots<i_r$ we have that for $3\le j\le r$ either all $s_{i_j}$ are of the form $b^{(u_j)}$ or there is a smallest index $j$ such that $s_{i_j}=x^{(u_j)}$ for some $x\neq b$. In the first case it follows that there must be consecutive blocks $BAB$ (yielding a contradiction) such that $i_1$ and $i_{r+1}$ are in the $A$ block, $i_2,\dots i_r$ are in the first $B$-block and $i_{r+2},\dots,i_{2r}$ are in the second. In the second case it follows that since there must be blocks $BA$ with $i_1$ in $A$ and $i_2$ in $B$, that $i_j$ must be in the $A$ block again, that is $s_{i_j}=a^{(u_j)}$. However, since $i_{r+1}<\dots <i_{r+j}$ it follows that there must be consecutive blocks $ABA$ in $S'$ (our final contradiction), such that $i_{r+1}$ is in the first $A$ block, $i_{r+j}$ in the second and $i_{r+2}, \dots, i_{r+j-1}$ are in the $B$ block.  
\end{proof}

%%%%%%%%%%%%%%%%%%%%%%%%%%%%%%%% Section 5: improve %%%%%%%%%%%%%%%%%%%%%%%%%%%%%%

\section{{\speci} sequences on at most $3k$ symbols}\label{sec:improve}

One possible way to improve on Theorem~\ref{thm:main} is to study {\speci} sequences on at most $3k$ symbols. 
The sequence $S_{n,c}=1,2,\dots,n,1,2,\dots  c$ for $n>c\ge 0$ turns out to be a key example in this situation.

Recall that by Proposition~\ref{prop:distance} the entries in a block of length $2k$ of a {\speci} sequence must all be distinct. 
Thus, if we let $f_k(n)$ denote the maximum length of a {\speci} sequence $S$ on $n$ symbols, then this observation immediately implies that $f_k(n)=n$ when $n<2k$ and up to isomorphism the only sequence achieving this value is $S_{n,0}$. When $n\ge 2k$ we can furthermore assume without loss of generality that if $S$ is nonrepetitive on $n$ symbols, then $S_i=i$ for $1\le i\le 2k$ (just like $S_{n,c}$.) 

If $n=2k$ then it follows from Proposition~\ref{prop:distance} that a sequence achieving $f_k(2k)$ must be of the form $S_{2k,c}$. It is easy to check $S_{2k,1}$ is in fact \speci, whereas $S_{2k,2}$ contains the $k$-bad index sequence $k+1,1,2,k+1,2k+1,2k+2$, which yields the repetition $k+1,1,2,k+1,1,2$.  Thus $f_k(2k)=2k+1$ with $S_{2k,1}$ being the unique sequence achieving this value. 
This $k$-bad index sequence also explains why we could not have consecutive blocks $ABA$ or $BAB$ in our construction for Theorem~\ref{thm:3k+1} .
For the remaining range we get

\begin{proposition}\label{prop:2k+}
~
\begin{enumerate}[a)]
\item If $n\ge 2k$, then $S_{n,n-k}$ has a $k$-bad sequence only when $n=2k$ and such a sequence must have $2=m<r$.
\item If $n\ge 2k+1$, then $f_k(n)\ge 2n-k$.
\end{enumerate}
\end{proposition}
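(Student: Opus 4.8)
The plan is to derive part~b) from part~a): for $n\ge 2k+1$ part~a) says $S_{n,n-k}$ has no $k$-bad sequence, hence is \speci, and since its length is $n+(n-k)=2n-k$ we get $f_k(n)\ge 2n-k$ at once. So essentially all the work is in part~a).

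\textbf{Setup.} Write $N=2n-k$ for the length of $S=S_{n,n-k}$, so $s_p=p$ for $1\le p\le n$ and $s_{n+p}=p$ for $1\le p\le n-k$. The structural fact I would record first is that two positions carry the same symbol exactly when they are equal or differ by exactly $n$ (the latter forcing one position in $[1,n-k]$ and the other in $[n+1,N]$). Now suppose $i_1,\dots,i_{2r}$ is $k$-bad with turning index $m$, and set $d_j=i_{j+r}-i_j$ for $1\le j\le r$ and $e_\ell=i_{\ell+1}-i_\ell$. The repetition in a) gives $s_{i_j}=s_{i_{j+r}}$, so by the structural fact $d_j\in\{-n,0,n\}$. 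Conditions b),c),d) give $e_\ell\in[-k,-1]$ for $\ell<m$, $e_\ell\in[1,k]$ for $\ell\ge m$, and $e_m\le k-1$. Since $d_{j+1}-d_j=e_{j+r}-e_j$, consecutive $d$'s differ by at most $2k$ in absolute value.

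\textbf{The case $n>2k$.} Here $2k<n$, so $|d_{j+1}-d_j|\le 2k$ forces $d_{j+1}=d_j$: the sequence $d_j$ is constant. If $m\le r$ then $i_{m+r}>i_m$ forces the constant value to be $+n$, and if $m>r$ then $i_m<i_{m-r}$ forces it to be $-n$. Taking the first case (the second is the mirror computation), we get $i_{r+1}=i_1+n$ and $i_{2r}=i_r+n$; since $r,r+1\ge m$ the gap bound c) at $j=r$ gives $i_r\ge i_{r+1}-k=i_1+n-k\ge n-k+1$, while $s_{i_r}=s_{i_{2r}}$ with $i_{2r}=i_r+n$ forces $i_r\le n-k$, a contradiction. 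Hence $S_{n,n-k}$ has no $k$-bad sequence when $n>2k$, which is exactly part~b) together with the ``only when $n=2k$'' half of part~a).

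\textbf{The case $n=2k$.} Now $2k=n$, so a step $d_{j+1}-d_j=-n$ would require $e_{j+r}=-k$ and $e_j=k$; but $e_\ell=-k$ forces $\ell<m$ while $e_\ell=k$ forces $\ell>m$, which cannot hold simultaneously. Thus $d$ is \emph{non-decreasing}, each step being $0$ or $+n$, and a $+n$ step at $j$ forces $e_j=-k$ and $e_{j+r}=k$ with $m-r<j<m$. The constant case is excluded exactly as above, so there is at least one jump. I would then pin down $m$ by a short case analysis on the number of jumps and on whether $m\le r$ or $m>r$. The non-symmetric condition d) is what eliminates the configuration $m>r$ (this is precisely why the reverse of the bad index sequence $k+1,1,2,k+1,2k+1,2k+2$ fails d) and is not $k$-bad), and the hypothesis $m\ge 3$ forces three consecutive decreasing positions whose matched symbols produce consecutive blocks of the forbidden $ABA/BAB$ type --- the very obstruction exploited in the proof of Theorem~\ref{thm:3k+1}. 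This leaves $m=2$; and once $m=2$ is known, $r\ge 3$ (i.e.\ $m<r$) is immediate from Proposition~\ref{prop:distance}, since a $k$-bad sequence with $m=2$ and $r\le 2$ would have length at most four with $m\le 3$, impossible because the repeated symbols of $S_{2k,k}$ lie exactly $2k$ apart.

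\textbf{Main obstacle.} The difficulty is concentrated entirely in the $n=2k$ analysis: extracting $m=2$ cleanly, that is, ruling out $m\ge 3$ via the local block-pattern clash and disposing of the asymmetric possibility $m>r$ using condition d). Everything else --- the structural symbol fact, the constancy of $d$ for $n>2k$, and the reduction of b) to a) --- is routine bookkeeping.
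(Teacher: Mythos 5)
Your setup is sound and, where it is carried out, genuinely cleaner than the paper's argument: the difference sequences $d_j=i_{j+r}-i_j\in\{-n,0,n\}$ and $e_\ell=i_{\ell+1}-i_\ell$ with $d_{j+1}-d_j=e_{j+r}-e_j$ are a nice packaging, the reduction of b) to a) matches the paper, and your $n>2k$ analysis (constant $d$ forced to be $\pm n$, then a numeric contradiction) is complete and correct. The paper instead chases indices through the window of unique symbols $\{n-k+1,\dots,n\}$ and uses a disjointness-of-symbol-sets argument; your route avoids that.

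The genuine gap is the $n=2k$ case, which is the entire content of the claim ``such a sequence must have $2=m<r$,'' and the mechanism you propose for it cannot work. You want to kill $m\ge 3$ because the matched symbols would ``produce consecutive blocks of the forbidden $ABA/BAB$ type --- the very obstruction exploited in the proof of Theorem~\ref{thm:3k+1}.'' But that obstruction does not exist for $S_{2k,k}$: this sequence literally \emph{is} the block pattern $ABA$ with $A=1,\dots,k$ and $B=k+1,\dots,2k$, so such blocks are not forbidden here. Indeed $S_{2k,k}$ really does have $k$-bad sequences exploiting exactly this structure --- for $k\ge 2$ the paper's index sequence $k+1,1,2,k+1,2k+1,2k+2$ (given for $S_{2k,2}$, but valid verbatim in $S_{2k,k}$; see also Remark~\ref{rem:finite} for $k=2$) is $k$-bad with $m=2$. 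In Theorem~\ref{thm:3k+1} the contradiction came from the base Thue sequence avoiding $aba$ and $bab$; no such avoidance is available for $S_{2k,k}$, so the appearance of an $xyx$ pattern is not by itself contradictory, and $m\ge3$ must instead be excluded by positional arithmetic. (This \emph{can} be done inside your framework: for $m\le r$ your sign argument gives $d_p\ne d_{p+1}$ for every $p<m$, so every step is $+n$ and hence $m\le 3$; and $m=3$ forces $d_1=-n$ and $e_1=e_2=-k$, whence $i_{r+1}=i_1-2k=i_3$, contradicting $i_3<i_4<\dots<i_{r+1}$.) Similarly, your elimination of $m>r$ is only asserted, not proved; here too your framework suffices --- at $p=m-r$ one has $d_p\ne d_{p+1}$ since $d_p=d_{p+1}$ would give $e_{m-r}=e_m$, which have opposite signs, yet condition d) gives $|d_{p+1}-d_p|=|e_m-e_{m-r}|\le (k-1)+k<2k\le n$, impossible for distinct values in $\{-n,0,n\}$ --- but neither this nor any substitute appears in your write-up. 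As written, the proposal establishes part b) and the ``only when $n=2k$'' half of a), but not the heart of a).
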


\begin{proof}
It suffices to prove the first statement, as it immediately implies the second. So suppose $n\ge 2k$ and $I=i_1,\dots,i_{2r}$ is a $k$-bad sequence of indices for some $m$. 
If $m=2r$, then $I$ is decreasing and so the fact that $s_{i_j}=s_{i_{j+r}}$ for all $1\le j\le r$ implies that $i_1>\dots >i_r\ge n+1$ and $n-k\ge i_{r+1}>\dots>i_{2r}$, yielding the contradiction $i_r-i_{r+1}>k$. So we may assume that $m<2r$.
%
%Since $s_i=s_j$ implies that $i=j$ or $|i-j|=2k$ it follows from Proposition~\ref{prop:distance} that the only $k$-bad sequence of length at most four has $m=4$. However  with $s_{i_1}=s_{i_3}$ and $s_{i_2}=s_{i_4}$ would imply that $i_3\ge n+1$ and $i_2\leq n-k$, a contradiction.

If $m>r$, then let $m'=m-r$. Since $s_{i_m}=s_{i_{m'}}$ and $i_{m'}>i_m$, it follows that $i_{m}=i_{m'}-n\in\{1,\dots,n-k\}$. Since $i_{m'}\ge n,$  $i_m\le n-k$ and for all $j$ we have $|i_j-i_{j+1}|\le k$ it follows that there must be some $j$ with $m'<j<m$ such that $i_j\in\{n-k+1,\dots, n\}$.  Since $I$ yields a repetition with $i_1>\dots >i_m$, but the symbol $s_{i_j}=i_j$ is unique in $S_{n,n-k}$ we conclude that $i_j=i_{j+r}$. 
%Observe that we must have $j<m<j+r$ (since $i_m$ is minimal) and thus $m'<j$. 
It follows that $j=m'+1$, since otherwise $i_{m'}>i_{j-1}>i_{j}$ and $i_m<i_{j+r-1}<i_{j+r}$ would contradict $s_{i_{j-1}}=s_{i_{j+r-1}}$ as the sets $\{s_{i_j+1},s_{i_j+2},\dots, s_{i_{m'}-1}\}$ and  $\{s_{i_m+1}, s_{i_m+2},\dots s_{i_j-1}\}$ are disjoint. Now $j=m'+1$ implies that
$i_{m'}-k=i_{j-1}-k\le i_j=i_{j+r}=i_{m+1}\le i_m+k-1$, and since $i_{m'}=i_m+n$ we get $n\le 2k-1$, a contradiction.

If $m\le r$, then let $m'=m+r$. It follows again that $i_{m'}=i_m+n$, and that there must be some $j$ such that $i_j=i_{j+r}\in\{n-k+1,\dots, n\}$ and $j<m<j+r$. Thus $m'>j+r$ this time. It follows that $j=m-1$, since otherwise $i_{j}>i_{j+1}>i_{m}$ and $i_{j+r}<i_{j+r+1}<i_{m'}$ would contradict $s_{i_{j+1}}=s_{i_{j+r+1}}$ as the sets  $\{s_{i_m+1}, s_{i_m+2},\dots s_{i_j-1}\}$ and  $\{s_{i_j+1},s_{i_j+2},\dots, s_{i_{m'}-1}\}$ are still disjoint. Now $j=m-1$ implies that $i_{m}+k=i_{j+1}+k\ge i_j=i_{j+r}=i_{m'-1}\ge i_{m'}-k$, and since $i_{m'}=i_m+n$ we get $n\le 2k$, a contradiction unless $n=2k$. In this case also $i_m+k=i_j=i_{j+r}=i_{m'}-k=x$ for some $k+1\le x\le n=2k$.

If we have $m>2$ then $j-1=m-2\ge 1$ and we consider $i_{j-1}$. Since $i_{j+r-1}<i_{j+r}$ and $k+1=n-k+1\le s_{i_j}\le n=2k$ implies that $s_{i_{j+r-1}}\in\{x-k,x-k+1,\dots,x-1\}$.  Similarly $i_{j-1}>i_j$ implies that $s_{i_{j-1}}\in\{x+1,x+2,\dots,n\}\cup\{1,2,\dots,k-(n-x)=x-k\}$. Since $s_{i_{j+r-1}}=s_{i_{j-1}}$ it now follows that this value must be $x-k=i_m$. Hence $i_{j+r-1}=i_m$ and thus $m=j+r-1=(m-1)+r-1$. This implies the contradiction $2=r\ge m>2$. Hence $m=2$ and the fact that $r>2$ follows from Proposition~\ref{prop:distance} and the fact that the distance between identical labels is $2k$.
\end{proof}

We believe that for in Proposition~\ref{prop:2k+} b) equality holds  when $2k<n<3k$. An exhaustive search by computer shows that this is the case when $2k<n<3k$ with $n\le 16$. Moreover $S_{2k+1,k+1}$ turns out to be the unique sequence achieving $f_k(2k+1)=3k+2$, whereas for $2k+2\le n<3k$ a typical sequence achieving $f_k(n)$ is obtained by permuting the last $n-k$ entries of $S_{n,n-k}$.

\begin{proposition}\label{2k}
The coloring of $T_{k,3}$ derived from $S_{2k,k}$ is nonrepetitive.
\end{proposition}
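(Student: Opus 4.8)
The plan is to argue by contradiction, converting a hypothetical repetition into a $k$-bad index sequence and then showing that the constrained shape of such a sequence cannot be realized by any path in a tree of height only $3$. First I note that $S_{2k,k}=1,2,\dots,2k,1,2,\dots,k$ has length $3k$, which is exactly $kh$ for $h=3$, so it does produce a well-defined derived edge-coloring of $T_{k,3}$ in the sense of the definition of a derived coloring.

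Next, suppose for contradiction that this coloring has a repetition on some path $P=v_0,v_1,\dots,v_{2r}$. By the forward direction of Theorem~\ref{thm:kspecial}, which remains valid for the finite tree $T_{k,3}$ by Remark~\ref{rem:finite}, this repetition yields a $k$-bad sequence of indices $i_1,\dots,i_{2r}$ for $S_{2k,k}$; moreover, inspecting Case~2 of that argument shows that the parameter $m$ of the $k$-bad sequence coincides with the position of the turning point of $P$ (with $m=2r$ in the monotone Case~1). The key leverage is Proposition~\ref{prop:2k+}~a) applied with $n=2k$, so that $S_{n,n-k}=S_{2k,k}$: every $k$-bad sequence for this $S$ has $2=m<r$. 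In particular $P$ is not monotone, its unique turning point is $v_2$, and $2r\ge 6$.

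Finally I would extract a contradiction from the height bound. Since the turning point of $P$ is $v_2$, this apex is the vertex of $P$ closest to the root, and the descending segment $v_2,v_3,\dots,v_{2r}$ consists of $2r-2\ge 4$ edges; hence $v_{2r}$ lies at least $4$ levels below $v_2$. But in $T_{k,3}$ no vertex has a descendant more than $3$ levels below it, so no such path can exist. This contradiction shows that the derived coloring is nonrepetitive.

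The main obstacle — and the reason Proposition~\ref{2k} cannot simply be read off Theorem~\ref{thm:kspecial} — is that $S_{2k,k}$ is \emph{not} \speci: by Proposition~\ref{prop:2k+}~a) it genuinely admits $k$-bad sequences, so the back direction of Theorem~\ref{thm:kspecial} fails here and the finite-height restriction is doing real work. The crux is therefore the translation of the purely combinatorial data ``$m=2$ and $r>2$'' into the geometric statement that $P$ must descend at least $4$ levels from its turning point, which is precisely the quantitative control over $m$ and $r$ that Proposition~\ref{prop:2k+}~a) was designed to provide.
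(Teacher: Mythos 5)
Your proof is correct and takes essentially the same route as the paper's: both convert a hypothetical repetition into a $k$-bad sequence via the forward direction of Theorem~\ref{thm:kspecial} (valid for $T_{k,3}$ by Remark~\ref{rem:finite}), invoke Proposition~\ref{prop:2k+}~a) to force $2=m<r$, and then contradict the height bound of $T_{k,3}$. The only difference is cosmetic: the paper pins down $r=3$ (so the path must turn at the root, giving $m=3$), while you directly observe that a descent of $2r-2\ge 4$ levels from the turning point $v_2$ is impossible in a tree of height $3$.
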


\begin{proof}
If the coloring of $T_{k,3}$ derived from $S_{2k,k}$ contains a repetition of length $2r$, then as in the proof of Theorem~\ref{thm:kspecial} it follows that there must be a $k$-bad sequence of $2r$ indices. From Proposition~\ref{prop:2k+} a) it now follows that $r> m=2$. Since a longest path in $T_{k,3}$ has 6 edges we must have $r=3$. However, any repetition of length 6 would have to connect two leaves and turn around at the root, and as such would have $m=3$, a contradiction.   
\end{proof}

Combining everything we know so far we get

\begin{corollary}\label{cor:small}
If $h\ge 3$, then $\pi'(T_{k,h})\le \lceil\frac{h+1}2k\rceil$.
\end{corollary}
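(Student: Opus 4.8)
The plan is to produce, for each $h\ge 3$, a single finite sequence $S$ whose derived coloring of $T_{k,h}$ is nonrepetitive and which uses at most $\lceil\frac{h+1}{2}k\rceil$ symbols; the number of distinct symbols in $S$ then bounds $\pi'(T_{k,h})$ from above. Recall from Section~\ref{sec:derived} that to color $T_{k,h}$ the sequence $S$ need only have length at least $kh$, and that by the forward direction of Theorem~\ref{thm:kspecial}, which by Remark~\ref{rem:finite} remains valid for the finite tree $T_{k,h}$, it suffices for $S$ to be \speci. So the entire task reduces to producing a \speci{} sequence on at most $\lceil\frac{h+1}{2}k\rceil$ symbols of length at least $kh$.

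For $h\ge 4$ I would set $n=\lceil\frac{h+1}{2}k\rceil$ and invoke Proposition~\ref{prop:2k+}~b). First I would check that the hypothesis $n\ge 2k+1$ holds: since $h\ge 4$ we have $n\ge\lceil\frac{5}{2}k\rceil$, and this is at least $2k+1$ for every $k\ge 1$ (for $k=1$ the ceiling is needed, giving $3=2k+1$). Then $f_k(n)\ge 2n-k$, and because $n\ge\frac{h+1}{2}k$ we get $2n-k\ge (h+1)k-k=kh$. Thus the witness $S_{n,n-k}$ from Proposition~\ref{prop:2k+} is a \speci{} sequence on $n$ symbols whose length $2n-k$ is at least $kh$, so its derived coloring of $T_{k,h}$ is nonrepetitive using at most $n$ colors, giving $\pi'(T_{k,h})\le\lceil\frac{h+1}{2}k\rceil$.

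The case $h=3$ is the single boundary value where this argument breaks down, and it is the main point requiring separate care. Here $\lceil\frac{h+1}{2}k\rceil=2k$, which lands exactly on the threshold $n=2k$ excluded by Proposition~\ref{prop:2k+}~b): a \speci{} sequence on $2k$ symbols has length only $f_k(2k)=2k+1<3k=kh$, so no \speci{} sequence on $2k$ symbols is long enough to color $T_{k,3}$. This is precisely what Proposition~\ref{2k} was established to circumvent. The sequence $S_{2k,k}$, although not \speci{} (by Proposition~\ref{prop:2k+}~a) it carries a $k$-bad index sequence with $2=m<r$), nonetheless has a derived coloring of the \emph{finite} tree $T_{k,3}$ that is nonrepetitive, because any repetition there would force $r=3$ together with a turn at the root, hence $m=3$. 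Since $S_{2k,k}$ uses $2k$ symbols, Proposition~\ref{2k} yields $\pi'(T_{k,3})\le 2k=\lceil\frac{h+1}{2}k\rceil$, completing the remaining case. The only genuine subtlety in the whole proof is recognizing that $h=3$ cannot be absorbed into the $f_k$-bound and must instead rely on the finite-tree phenomenon captured by Remark~\ref{rem:finite} and Proposition~\ref{2k}.
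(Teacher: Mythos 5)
Your proposal is correct and follows essentially the same route as the paper: for $h>3$ apply Proposition~\ref{prop:2k+}~b) with $n=\lceil\frac{h+1}{2}k\rceil$ (so that the \speci{} sequence $S_{n,n-k}$ has length $2n-k\ge hk$) together with Remark~\ref{rem:finite}, and handle $h=3$ separately via Proposition~\ref{2k}. Your added verifications (that $n\ge 2k+1$ holds for $h\ge 4$, and why $h=3$ cannot be absorbed into the general bound) are details the paper leaves implicit, but the argument is the same.
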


\begin{proof}
If $h=3$, then the result follows from Proposition~\ref{2k}. For $h>3$ we can apply Proposition~\ref{prop:2k+} b) with $n=\lceil\frac{h+1}2k\rceil$. Since $2n-k\ge hk$ it now follows from Remark~\ref{rem:finite} that the coloring of $T_{k,h}$ derived from $S_{n,n-k}$ is nonrepetitive. 
\end{proof}

The bound in Corollary~\ref{cor:small} is better than that derived from Theorem~\ref{thm:3k+1} when $h\le 5$ and we obtain the following table of values for $\pi'(T_{h,k})$, where the presence of two values denotes a lower and an upper bound. The values marked by an asterisk were confirmed by computer search. The programs used are based on those found in~\cite{thesis} and the Python code is available at  http://public.csusm.edu/akundgen/Python/Nonrepetitive.py 

\begin{table}[h]
\begin{tabular}{|c|c|c|c|c|c|c|c|c|}
\hline
$k\backslash h$   	& 1  & 2  & 3  & 4 & 5  & 6-10 & $h\geq 11$  \\ \hline
1       			& 1  & 2  & 2  & 3 & 3  & 3      & 3 \\ \hline
2       			& 2  & 4  & 4  & 5 & $5^*$  & $5^*$      & 5,7 \\ \hline
3      				& 3  & 5  & $6^*$   &  $6^*$  & 6,9   & 6,10 & 6,10      \\ \hline
4      				& 4  & 7  & $7^*$   & 7,10 & 7,12 & 7,13 & 7,13      \\ \hline
5      				& 5  & 8  & 9,10 & 9,13 & 9,15 & 9,16 & 9,16      \\ \hline
$\vdots$ & $\vdots$ & $\vdots$                 & $\vdots$  & $\vdots$ & $\vdots$  & $\vdots$ & $\vdots$ \\ \hline
$k$     & $k$     & $\lfloor 1.5k \rfloor$+1 & $1.61k, 2k$ & $1.61k,\lceil2.5k\rceil$ & $1.61k,3k$ & $1.61k,3k+1$ & $1.61k,3k+1$ \\  \hline
\end{tabular}
\end{table}

It is worth noting that even though it may be possible to use derived colorings to improve individual columns of this table by a more careful argument (as we did in Proposition~\ref{2k}), this seems unlikely to work for $\pi'(T_k)$ in general. Theorem~\ref{thm:kspecial} implies that the infinite sequence from which we derive the coloring must be {\speci}, and while we were able to provide such a sequence on $3k+1$ symbols, it seems unlikely that there are such sequences on $3k$ symbols. An exhaustive search shows that for $k\le 5$ the maximum length of a $k$-special sequence on $n=3k$ symbols is $5k+3$, which is only 3 more than the length of  $S_{n,n-k}$. The $k!$ examples achieving this value are all of the strange form $[1,2k],1,[2k+1,3k],x_1,[k+2,2k],1,x_2,x_3,\dots,x_k,x_1,2k+1$ where $\{x_1,\dots,x_k\}=\{2,\dots,k+1\}$ and $[a,b]$ denotes $a,a+1,a+2\dots,b$. In other words they are $S_{3k,2k+1}$ with the last $2k+1$ entries permuted and with 1 and $x_1$ inserted after positions $2k$ and $3k$.

A more promising next step would be to try to improve the lower bounds for $\pi'(T_{k,h})$ for $h=3,4,5$.

%\acknowledgements
%\label{sec:ack}
%At the end of the manuscript, right before the bibliography we might want to place an acknowledgment. 

\nocite{*} %to show full bibliography even papers not quoted ... Or replace * by 'key1,key2,...' for specific references

%\bibliographystyle{abbrvnat}
% use the following instead if you encounter problems 
%\bibliographystyle{alpha}
%ACtually, I think the following looks best:
%\bibliographystyle{siam}
\bibliography{repfree-tree-coloring-final}
\label{sec:biblio}

\end{document}